\documentclass[10pt]{amsart}

\usepackage{color,soul}
\sethlcolor{yellow}
\usepackage[margin=1.25in]{geometry} 
\usepackage{kpfonts}
\usepackage{enumerate}
\usepackage[mathscr]{eucal}
\usepackage{color} 
\usepackage[colorlinks=true, linkcolor=red, citecolor=blue, menucolor=black]{hyperref}

\usepackage{amsmath,amsthm, amsfonts, amssymb,MnSymbol,extpfeil,mathtools}
\numberwithin{equation}{section}
\usepackage{graphicx}
\usepackage{listings}
\usepackage{enumerate}
\usepackage{centernot}
\usepackage{enumitem}
\usepackage{xfrac}
\usepackage{leftindex}
\usepackage{hyperref}
\usepackage{adjustbox}

\usepackage{amsmath, amsthm, amssymb}
\numberwithin{equation}{section}
\usepackage{xspace}
\usepackage{graphicx}
\usepackage{array}
\usepackage{braket}
\usepackage{geometry}
\usepackage{multicol}
\usepackage{mathtools}
\usepackage{enumerate}
\usepackage{delarray}
\usepackage{mathtools}
\usepackage{faktor,extpfeil} 
\usepackage{mathrsfs}
\usepackage{tikz-cd}
\usepackage{hyperref}
\usepackage[normalem]{ulem} 
\usepackage[italicdiff]{physics} 
\usepackage{bbm} 
\usepackage{float}
\usepackage{stmaryrd} 
\usepackage{aligned-overset}
\usepackage{xcolor}
\usepackage{cite}

\theoremstyle{plain}
\newtheorem{thm}{Theorem}[section]
\newtheorem{lemma}[thm]{Lemma}
\newtheorem{cor}[thm]{Corollary}

\newtheorem{maintheorem}{Theorem}

\newtheorem{maincor}[maintheorem]{Corollary}

\newtheorem{prop}[thm]{Proposition}
\theoremstyle{remark}

\theoremstyle{definition}

\newtheorem*{notation*}{Notation}

\DeclarePairedDelimiterX{\inp}[2]{\langle}{\rangle}{#1, #2}

\makeatletter
\newcommand*{\bigcdot}{}
\DeclareRobustCommand*{\bigcdot}{%
  \mathbin{\mathpalette\bigcdot@{}}%
}
\newcommand*{\bigcdot@scalefactor}{.5}
\newcommand*{\bigcdot@widthfactor}{1.15}
\newcommand*{\bigcdot@}[2]{%
  \sbox0{$#1\vcenter{}$}
  \sbox2{$#1\cdot\m@th$}%
  \hbox to \bigcdot@widthfactor\wd2{%
    \hfil
    \raise\ht0\hbox{%
      \scalebox{\bigcdot@scalefactor}{%
        \lower\ht0\hbox{$#1\bullet\m@th$}%
      }%
    }%
    \hfil
  }%
}
\makeatother

\newcommand{\N}{\mathbb{N}\xspace}

\newcommand{\ca}{\curvearrowright}

\DeclareMathOperator{\Prob}{Prob}

\DeclareMathOperator{\id}{id}

\newcommand{\eps}{\varepsilon}

\usepackage{enumitem}
\usepackage{color}
\newlist{steps}{enumerate}{1}
\usepackage[foot]{amsaddr}
\setlist[steps, 1]{label = Step \arabic*:}

\hypersetup{%
  colorlinks=true,%
  linkcolor=blue,%
  citecolor=blue,%
  filecolor=blue,%
  menucolor=blue,%
  urlcolor=blue,%
  pdfnewwindow=true,%
  pdfstartview=FitBH
}   

\title{Bi-exact Wreath-like Product Groups }

\author{Ionu\c{t} Chifan}
\address{The University of Iowa, Mathematics Department, 14 MacLean Hall, Iowa City,
IA, 52240, USA}
\email{ionut-chifan@uiowa.edu}
\author{Zhiyuan Yang}
\address{Purdue University, Department of mathematics,  150 N University St, West Lafayette, IN, 47907, USA}
\email{yang3261@purdue.edu}

\begin{document}

\begin{abstract}
In this short paper, we prove the bi-exactness of all wreath-like product groups $G\in\mathcal{WR}(A,B\curvearrowright I)$, as introduced in \cite{CIOS1}, whenever $A$ is amenable, $B$ is bi-exact, and the action $B\curvearrowright I$ has amenable stabilizers. As applications, we obtain solidity for the associated group von Neumann algebras and, by combining our result with existing rigidity theorems, new examples exhibiting the McDuff superrigidity described in \cite{AMCOS25}.
\end{abstract}

\maketitle

\section{Introduction}

A countable discrete group $G$ is said to be \emph{bi-exact} if it is exact and there exists a map $\mu:G\to \operatorname{Prob}(G)\subset \ell^1(G)
$ such that, for every $h,k\in G$,

\begin{equation}
\lim_{g\longrightarrow\infty}
\bigl\|\mu(kgh)-k\cdot \mu(g)\bigr\|_1=0,
\end{equation}
where $g\to\infty$ means that $g$ eventually leaves every finite subset of $G$, and the left translation action of $G$ on $\operatorname{Prob}(G)$ is given by $(k\cdot\mu)(g)=\mu(k^{-1}g)$ for all $ g,k\in G$. 
\vskip 0.1in
Bi-exactness, introduced by Ozawa,  has emerged as an important group-theoretic manifestation of negative curvature with particularly strong consequences for the associated von Neumann algebras. Most notably, Ozawa showed that the group von Neumann algebra of a bi-exact group is solid \cite{Oza04}, while bi-exactness has subsequently played a central role in unique prime factorization and rigidity results for group von Neumann algebras and crossed products; see, for instance, \cite{OP04,CS13,PV12,CI18,CTY26}---just to enumerate a few. The class is broad enough to contain many of the standard negatively curved groups, including hyperbolic groups and lattices in rank-one simple Lie groups, while at the same time being restrictive enough to impose strong structural properties on their operator algebras. Thus, enlarging the classes of groups known to be bi-exact provides an important avenue for establishing new rigidity and structural results for the associated von Neumann algebras, as will also be illustrated by the applications discussed in this paper.

\vskip 0.1in

Wreath-like product groups were introduced in \cite{CIOS1} as a flexible generalization of classical wreath products and provided, among other things, the first examples of property (T) W$^*$-superrigid groups. These results gave the first substantial positive evidence towards Connes Rigidity Conjecture.
 We briefly recall their definition. 
 
 Given two countable groups $A$, $B$ and an action $B \ca I$ we denote by  $\mathcal {WR}(A,B\ca I)$ the collection of all groups $G$ that can be realized as wreath-like product extensions, i.e.\ there exists a short exact sequence  \begin{equation}\label{wrlext}1\longrightarrow A^{(I)} \hookrightarrow G \overset{\varepsilon}{\twoheadrightarrow} B \longrightarrow 1, \end{equation}  
such that for all $g\in G$ and $i\in I $ we have $ g A_i g^{-1}= A_{\varepsilon(g)i } $,  where $A_i$ is the $i$-labeled copy of $A$ in the direct sum $A^{(I)}$.

\vskip 0.1in

In this paper, we prove the following result on bi-exactness.
\begin{maintheorem}\label{mainthm}
Let $G\in\mathcal{WR}(A,B\curvearrowright I)$ be a wreath-like product group such that $A$ is amenable, $B$ is bi-exact, and the action $B\curvearrowright I$ has amenable stabilizers. Then $G$ is bi-exact.
\end{maintheorem}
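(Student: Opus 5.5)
We will use the characterization of bi-exactness through boundary amenability: $G$ is bi-exact if and only if the action $G\times G\curvearrowright \partial\beta G$ of left and right translations on the Stone--\v{C}ech boundary is topologically amenable. Equivalently, $G$ is exact and $G\curvearrowright \ell^\infty(G)/c_0(G)$ with left and right translations is amenable. Most of the argument is a relative version of this criterion: we will show that $G$ is bi-exact \emph{relative to the family} $\mathcal{S}=\{\varepsilon^{-1}(\mathrm{Stab}_B(i)) : i\in I\}\cup\{A^{(I)}\}$, and then remove the family.

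\textbf{Exactness.} First, $G$ is exact. It is an extension of the amenable group $A^{(I)}$ by the exact group $B$, and exactness passes to extensions with exact kernel and quotient.

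\textbf{Bi-exactness relative to $\mathcal{S}$.} Let $\mu_B:B\to\Prob(B)$ witness bi-exactness of $B$. The goal is to transport it to $G$ and compensate for the kernel and the stabilizers using amenability.

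Fix a set-theoretic section $s:B\to G$. Using the fibres $\varepsilon^{-1}(b)=s(b)A^{(I)}$, each $g\in G$ has a ``$B$-coordinate'' $\varepsilon(g)$ and an ``$A^{(I)}$-coordinate''.

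Since $A^{(I)}$ is amenable, choose F{\o}lner means $\nu_n\in\Prob(A^{(I)})$. Define $\mu(g)$ by spreading $\mu_B(\varepsilon(g))$ over cosets of $A^{(I)}$ with a F{\o}lner density. This yields asymptotic bi-invariance as $\varepsilon(g)\to\infty$ in $B$.

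The remaining directions are:
\begin{itemize}
\item $g\to\infty$ inside a bounded neighbourhood of $A^{(I)}$, i.e.\ $\varepsilon(g)$ stays finite;
\item the twisting of the right action, caused by the conjugation $gA_ig^{-1}=A_{\varepsilon(g)i}$.
\end{itemize}
This is where $I$ enters. Write $g=a\,s(b)$ with $a=(a_i)\in A^{(I)}$. When $\mathrm{supp}(a)$ is large or $a_i\to\infty$ in some coordinate, the element $g$ lies asymptotically close to the subgroups $\varepsilon^{-1}(\mathrm{Stab}(i))$, which normalize $A_i$.

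Concretely, we aim to verify that $G\curvearrowright \partial\beta G$ (bi-action) is amenable relative to $\mathcal{S}$ in the sense of Brown--Ozawa, Section~15. We would do this by constructing a $G\times G$-equivariant u.c.p.\ map from $\ell^\infty(G)/c_0(G)$ into a boundary built from $B$ and from the coset spaces $G/\varepsilon^{-1}(\mathrm{Stab}(i))$. This follows the template of Ozawa's proof that wreath products $A\wr B$ with amenable $A$ and bi-exact $B$ are bi-exact.

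\textbf{Removing the family.} Relative bi-exactness with respect to a family of amenable subgroups implies bi-exactness. Each member of $\mathcal{S}$ is amenable: $\varepsilon^{-1}(\mathrm{Stab}(i))$ is an extension of the amenable group $\mathrm{Stab}(i)$ by the amenable group $A^{(I)}$. The implication holds because the boundary pieces $G/H$ for amenable $H$ carry amenable actions, and amenable actions compose with the relative ones. This closes the argument.

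\textbf{Main obstacle.} The hard step is the construction of the relative bi-invariant map. Unlike a genuine wreath product, $G$ need not split, so the map $a\mapsto a^g$ is only a twisted permutation of coordinates. The resulting cocycle must not destroy the asymptotic invariance.

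The first thing I would try is to avoid coordinates altogether and use the characterization via properly proximal or Higson-type boundaries. Here I would use that $A^{(I)}$ is a normal amenable subgroup, so $\ell^\infty(G)^{A^{(I)}}$ identifies with $\ell^\infty(B)$ along $\varepsilon$. Bi-exactness of $B$ then pulls back to bi-exactness of $G$ relative to $\{A^{(I)}\}$. The stabilizer condition should then be needed only to handle the right multiplication by elements of $A^{(I)}$, which moves $g$ within its $A^{(I)}$-coset through the $I$-indexed coordinates.
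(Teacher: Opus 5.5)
The proposal has a genuine gap: the step \emph{Removing the family} does not hold. It is false that bi-exactness relative to a family of amenable subgroups implies bi-exactness.

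Take $G=\mathbb{Z}\times F_2$, which lies in $\mathcal{WR}(\mathbb{Z},F_2\curvearrowright\{\mathrm{pt}\})$. Your first step shows $G$ is bi-exact relative to $\{\mathbb{Z}\}$; this is exactly pulling back a witness for $F_2$ and spreading it over cosets of the amenable normal subgroup $\mathbb{Z}$, cf.\ Lemma~\ref{quotientlift1} and the corollary after it. Yet $L(G)=L(\mathbb{Z})\bar\otimes L(F_2)$ is not solid, so $G$ is not bi-exact. In boundary language, the points of $\partial\beta G$ in the closure of the central subgroup $\mathbb{Z}$ are fixed by conjugation by $F_2$. So the $G\times G$-action near an amenable subgroup $H$ can have non-amenable stabilizers. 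The relevant pieces are closures of double cosets $sHt$, not $G/H$ with its left action.

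In your setting this is decisive. Since $N=A^{(I)}\in\mathcal S$ and $N\le\varepsilon^{-1}(\mathrm{Stab}(i))$, bi-exactness relative to $\mathcal S$ already follows from bi-exactness relative to $\{N\}$, and that uses no hypothesis on stabilizers. If your removal principle held, applying it to $\{N\}$ would make every $G\in\mathcal{WR}(A,B\curvearrowright I)$ with $A$ amenable and $B$ bi-exact bi-exact, which the example refutes. Consequently the stabilizer hypothesis never does any work in your argument. The regime where all the difficulty lies, $g\to\infty$ while $\varepsilon(g)$ stays in a finite set $L\subset B$, is left untreated.

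What is missing is a construction, for $g=xs(b)$ with $b\in L$ and $g\to\infty$, of measures $\nu_g\in\Prob(I)$ with $\|\nu_{kgh}-\varepsilon(k)\nu_g\|_1\to0$. Only after that can amenability of stabilizers be used, via Lemma~\ref{uniform-coset-reiter} applied to $\varepsilon^{-1}(H_\beta)$, to lift from $\Prob(I)$ to $\Prob(G)$ (Proposition~\ref{supportlift2}). The paper builds these measures from weighted supports:
\begin{itemize}
\item It takes a proper weight $\rho$ on $\Omega=\bigsqcup_i A_i$ that changes by at most $1$ under the finitely many twisted coordinate bijections $T_{k,h,b}$ produced by the cocycle (Lemma~\ref{properness1}).
\item It sets $Z_g=\sum_{i\in{\bf s}(g)}\rho(i,x_i)\delta_i$ and proves $\|Z_{kgh}-\varepsilon(k)Z_g\|_1\le|{\bf s}(g)|+C_{K,L}$ and $|{\bf s}(g)|/\|Z_g\|_1\to0$ (Lemmas~\ref{comparison-fin} and~\ref{properness}).
\item It glues this regime to the one where $\varepsilon(g)\to\infty$ using a cutoff $\chi$ on $B$ and the diagonal Lemma~\ref{unif-riter}.
\end{itemize}

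The weight is essential. Uniform measures on ${\bf s}(g)$ already fail for elements supported on one coordinate whose entry tends to infinity in $A$: left multiplication by a fixed element of $N$ supported elsewhere moves half the mass. Your \emph{Main obstacle} paragraph names the cocycle issue correctly but gives no construction. As written, the proposal proves only bi-exactness relative to $N$.
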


When the extension \eqref{wrlext} splits, so that $G$ is a classical generalized wreath product, Theorem~\ref{mainthm} recovers the corresponding bi-exactness results from \cite{BO08} and \cite[Theorem 4.7]{De19}. More importantly, the theorem applies to genuinely non-split wreath-like extensions and, in particular, to the property (T) W$^*$-superrigid groups introduced in \cite{CIOS1}. To the best of our knowledge, these provide the first examples of bi-exact property (T) W$^*$-superrigid groups. The theorem also provides a different route to several structural phenomena studied in \cite{AMCOS25}, where a uniform boundedness assumption on the $2$-cocycle defining the wreath-like extension was imposed in order to construct the deformation-theoretic tools used there.

We record two immediate consequences. First, by Ozawa's solidity theorem for bi-exact groups \cite{Oza04}, Theorem~\ref{mainthm} yields the following.

\begin{maincor}\label{solid-cor}
Let $G\in\mathcal{WR}(A,B\curvearrowright I)$ be as in Theorem~\ref{mainthm}. Then $L(G)$ is solid.
\end{maincor}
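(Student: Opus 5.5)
The corollary is a direct consequence of Theorem~\ref{mainthm} together with Ozawa's solidity theorem \cite{Oza04}. Ozawa's theorem says that for every bi-exact countable group $G$, the group von Neumann algebra $L(G)$ is solid: for every diffuse von Neumann subalgebra $Q\subseteq L(G)$, the relative commutant $Q'\cap L(G)$ is amenable.

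So the plan is simply to verify that $G$ satisfies the hypotheses of Ozawa's theorem. Let $G\in\mathcal{WR}(A,B\curvearrowright I)$ with $A$ amenable, $B$ bi-exact, and $B\curvearrowright I$ having amenable stabilizers. Theorem~\ref{mainthm} applies verbatim and gives that $G$ is bi-exact. In the formulation used in the introduction, bi-exactness already includes exactness of $G$, which is the form of the hypothesis Ozawa uses. We also note that $G$ is countable: it is an extension of the countable group $B$ by the countable group $A^{(I)}$ (here $I$ is countable, as implicit in the setup). Hence $L(G)$ is a well-defined finite von Neumann algebra with separable predual, and Ozawa's theorem yields that $L(G)$ is solid.

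There is no real obstacle here. The only point to check is that the notion of bi-exactness produced by Theorem~\ref{mainthm} matches the one in \cite{Oza04}. Ozawa phrases it as exactness plus the existence of $\mu:G\to\operatorname{Prob}(G)$ with $\|\mu(kgh)-k\cdot\mu(g)\|_1\to 0$ as $g\to\infty$. This is equivalent to his ``class $\mathcal S$'' condition, namely amenability of the left–right action $G\times G\curvearrowright \partial_\beta G$ on the Stone–Čech boundary; see \cite{BO08}. The definition stated at the start of the paper is exactly this one, so the corollary follows immediately.
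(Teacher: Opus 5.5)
Your proposal is correct and is exactly the paper's argument. Theorem~\ref{mainthm} gives that $G$ is bi-exact, meaning exact together with the asymptotically equivariant map $\mu$, which is Ozawa's class $\mathcal S$; Ozawa's solidity theorem \cite{Oza04} then yields that $L(G)$ is solid.
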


This considerably extends the solidity-type result obtained in \cite[Corollary 4.7]{CIOS1}. There the base group $A$ was assumed to be abelian and the argument relied on the structure of the associated equivalence relations. Corollary~\ref{solid-cor}, by contrast, applies to arbitrary amenable base groups and follows directly from the bi-exactness established in Theorem~\ref{mainthm}. 

A second application concerns the McDuff superrigidity phenomenon introduced in \cite[Definition 1.1]{AMCOS25}. The examples constructed there arise as infinite direct sums of property (T) W$^*$-superrigid wreath-like product groups whose defining $2$-cocycles satisfy a uniform boundedness condition \cite[Definition 23]{AMCOS25}. Theorem~\ref{mainthm} allows us to completely remove this restriction. Indeed, the wreath-like product groups covered by Theorem~\ref{mainthm} are bi-exact, and hence the infinite direct-sum rigidity theorem of Ding--Drimbe \cite[Theorem 6.2]{DD25}, combined with the W$^*$-superrigidity input used in \cite{AMCOS25}, gives the following.

\begin{maincor}\label{mcduff-cor}
Let $(G_n)_{n\in\mathbb N}$ be a sequence of property (T), W$^*$-superrigid groups with $G_n\in\mathcal{WR}(A_n,B_n\curvearrowright I_n)$, where $A_n$ is abelian, $B_n$ is any icc subgroup of a hyperbolic group, and the action $B_n\curvearrowright I_n$ has amenable stabilizers, for every $n\in\mathbb N$. Then the infinite direct sum
$\bigoplus_{n\in\mathbb N}G_n$
is McDuff superrigid. In particular, if $H$ is any countable group such that
$L(H)\cong L(\bigoplus_{n\in\mathbb N}G_n)$,
then one can find an icc amenable group $A$ such that
$H\cong (\bigoplus_{n\in\mathbb N}G_n)\times A$.
\end{maincor}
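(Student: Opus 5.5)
The plan is to reduce Corollary~\ref{mcduff-cor} to Theorem~\ref{mainthm} together with two external inputs: the infinite direct-sum rigidity theorem \cite[Theorem 6.2]{DD25} and the W$^*$-superrigidity argument of \cite{AMCOS25}. The approach mirrors the proof in \cite{AMCOS25}. The only place where that proof used the uniform boundedness of the $2$-cocycles \cite[Definition 23]{AMCOS25} was to get the deformation/rigidity input for each factor. That input can instead be supplied by bi-exactness, which is exactly the hypothesis needed to apply Ding--Drimbe.

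\textbf{Step 1: each $G_n$ is bi-exact.} Since $A_n$ is abelian, it is amenable. Hyperbolic groups are bi-exact, and bi-exactness passes to subgroups, since one can restrict the map $\mu$ and compose with a $G$-equivariant conditional expectation onto $\ell^1$ of the subgroup by coset decomposition. Hence $B_n$ is bi-exact. The stabilizers of $B_n\curvearrowright I_n$ are amenable by assumption, so Theorem~\ref{mainthm} shows that $G_n$ is bi-exact. Moreover, each $G_n$ is icc, because $L(G_n)$ is a II$_1$ factor (W$^*$-superrigid groups in the sense of \cite{CIOS1} are icc), and each $G_n$ is non-amenable, having property (T) and being infinite.

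\textbf{Step 2: apply \cite[Theorem 6.2]{DD25}.} For a countable group $H$ with $L(H)\cong L(\bigoplus_n G_n)$, or more generally $L(H)\cong L(\bigoplus_n G_n)\bar\otimes R$ as in \cite[Definition 1.1]{AMCOS25}, the theorem is applicable because the $G_n$ are icc, non-amenable and bi-exact. Its conclusion should be a product decomposition
\[
H\cong \Bigl(\bigoplus_n H_n\Bigr)\times A,
\]
with $A$ icc amenable and $L(H_n)\cong L(G_n)^{t_n}$ for scalars $t_n>0$, where the product of the $t_n$ is compensated by the amenable part.

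\textbf{Step 3: identify the factors.} We then need to conclude that $H_n\cong G_n$. This follows from the W$^*$-superrigidity of $G_n$ as used in \cite{AMCOS25}. Because $G_n$ has property (T), the amplification $L(G_n)^{t_n}$ can be a group von Neumann algebra only when $t_n=1$; this is the rigidity-under-amplification input from \cite{CIOS1}, which \cite{AMCOS25} already invokes. Plain W$^*$-superrigidity then gives $H_n\cong G_n$, so $H\cong (\bigoplus_n G_n)\times A$.

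\textbf{Main obstacle.} I expect the main difficulty to be bookkeeping in Steps 2--3: checking that the precise hypotheses and conclusion of \cite[Theorem 6.2]{DD25} match \cite[Definition 1.1]{AMCOS25}, including how amplifications are handled and the possible hyperfinite tensor factor. I would follow the argument of \cite{AMCOS25} verbatim at this point, replacing each appeal to the bounded-cocycle deformation by an appeal to bi-exactness via Step 1.
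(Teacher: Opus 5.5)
Your proposal is correct and follows the paper's route. Theorem~\ref{mainthm} applies because $A_n$ is abelian, hence amenable, and $B_n$ is bi-exact as a subgroup of a hyperbolic group; this makes each $G_n$ bi-exact, which is the input needed for \cite[Theorem 6.2]{DD25}, and the W$^*$-superrigidity input from \cite{CIOS1,AMCOS25} then identifies the factors. Two small corrections: $t_n=1$ comes from the amplification version of the W$^*$-superrigidity theorem of \cite{CIOS1} for these wreath-like groups, not from property (T) by itself; and the map $\Prob(G)\to\Prob(H)$ in your subgroup argument should be $H$-equivariant, not $G$-equivariant (push forward along $g=ht\mapsto h$ for fixed right coset representatives $t$).
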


\vskip 0.1in 

 The proof of Theorem \ref{mainthm} builds on the strategy developed by Brown and Ozawa for classical wreath products in \cite[Section 15.3]{BO08}, and subsequently adapted by Deprez to the locally compact setting in \cite{De19}. A central ingredient in these arguments is the construction of suitable weighted measures on the support of an element, whose asymptotic equivariance eventually yields the probability measures required for bi-exactness. However, in the wreath-like setting considered here, the extension need not split, and multiplication on the left and right introduces nontrivial cocycle terms into the individual coordinates. As a result, the length-function arguments available for ordinary wreath products cannot be applied directly. To overcome this technical difficulty, we construct proper weights adapted to the finitely many coordinate transformations arising from these cocycles and establish the corresponding weighted-support estimates. Combined with suitable lifting and diagonalization arguments, this allows us to extend the original Brown--Ozawa and more recent Deprez method to the more general class of wreath-like groups considered in this paper.

In addition, we recast these arguments in terms of equivariant u.c.p.\ maps on the boundary bidual. This yields a more streamlined proof and, at the same time, highlights a different perspective on the underlying techniques by placing them within the framework of boundary actions.
\vskip 0.08in
\noindent{\bf Acknowledgments.} The first author was partially supported form the NSF grant DMS-2452247.

\noindent{\bf AI tool disclosure.} The authors used ChatGPT (OpenAI) for English-language editing, proofreading, and improving the organization and clarity of the exposition. It was also used to assist with literature searches for related results and references, as well as with developing, checking, and extending some of the mathematical arguments. The authors take full responsibility for the content and final form of the paper, including the accuracy of the references and the correctness of all mathematical statements and proofs.

\section{Proof of the Main Result}

To prove our main theorem, we first need two preliminary results. The first is a discrete, two-sided version of the diagonalization lemma \cite[Lemma 3.2]{De19}. We follow closely Deprez's argument, with minor adaptations to our setting. As observed in \cite{De19}, this argument is based on the diagonal trick from \cite[Exercise 15.1.1]{BO08}. We include the details only for reader's convenience.

\begin{lemma}\label{unif-riter} Let $G$ be a countable group. Assume that for every finite subset $K\subset G$ and every scalar $\delta>0$ there exists a map $\mu_{K,\delta}: G \longrightarrow \Prob(G)$ and a finite subset $D_{K,\delta}\subset G$ such that \[ \|\mu_{K, \delta}(kgh)- k\mu_{K,\delta}(g)\|_1<\delta,\]
for all $k,h \in K$ and $g\in G\setminus D_{K,\delta}$. Then  there exists a map $\mu: G \longrightarrow \Prob(G)$ such that 
\[\lim_{g\rightarrow \infty}\|\mu(kgh)-k\mu(g) \|_1=0. \]
    
\end{lemma}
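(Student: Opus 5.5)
We plan to use a diagonal argument: exhaust $G$ by finite sets and glue the maps $\mu_{K,\delta}$ along a sequence of disjoint finite "shells". Fix an increasing exhaustion $K_1\subset K_2\subset\cdots$ of $G$ by finite symmetric subsets containing the identity, with $\bigcup_n K_n=G$, and put $\delta_n=2^{-n}$. Write $\mu_n=\mu_{K_n,\delta_n}$ and $D_n=D_{K_n,\delta_n}$. We inductively choose finite sets $F_1\subset F_2\subset\cdots$ exhausting $G$ such that, for every $n$:
\begin{enumerate}[label=(\roman*)]
\item $F_n\supseteq D_{n+1}\cup K_{n}$;
\item $K_{n}F_nK_{n}\subseteq F_{n+1}$, where $K_nF_nK_n=\{kgh : k,h\in K_n,\ g\in F_n\}$.
\end{enumerate}
This is possible because all sets involved are finite. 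We then define $\mu(g)=\mu_n(g)$ for $g\in F_{n}\setminus F_{n-1}$ (with $F_0=\emptyset$). Since the $F_n$ exhaust $G$, this defines $\mu$ on all of $G$.

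To verify the conclusion, fix $k,h\in G$ and $\varepsilon>0$. Choose $N$ with $k,h\in K_N$ and $\delta_{N}<\varepsilon$. Let $g\notin F_{N+1}$ and say $g\in F_n\setminus F_{n-1}$ with $n\ge N+2$. We need to locate $kgh$ among the shells. By (ii), $g\notin F_{n-1}$ implies $kgh\notin F_{n-2}$: otherwise $g=k^{-1}(kgh)h^{-1}\in K_{n-2}F_{n-2}K_{n-2}\subseteq F_{n-1}$, using $k,h\in K_N\subseteq K_{n-2}$ and symmetry. Likewise $kgh\in K_nF_nK_n\subseteq F_{n+1}$. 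Hence $kgh$ lies in one of the shells $n-1$, $n$, $n+1$, so $\mu(kgh)=\mu_m(kgh)$ for some $m\in\{n-1,n,n+1\}$, while $\mu(g)=\mu_n(g)$.

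The main obstacle is this mismatch of indices: when $m\ne n$ we are comparing $\mu_m(kgh)$ with $k\mu_n(g)$ for different maps, and no estimate relates distinct $\mu_m$'s directly. We would first try to eliminate the mismatch by smoothing across shells. Replace the hard cutoff with a slowly varying interpolation: for $g\in F_n\setminus F_{n-1}$ set
\[
\mu(g)=\frac1{L_n}\sum_{j=1}^{L_n}\mu_{n-j}(g),
\]
an average over a window of length $L_n\to\infty$ with $n-L_n\to\infty$. We also strengthen (i) so that $g\notin F_{n-1}$ lies outside $D_{K_{j},\delta_{j}}$ for every $j\le n$, which holds as long as $F_{n-1}\supseteq D_1\cup\cdots\cup D_{n}$. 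Then for $m\in\{n-1,n,n+1\}$ the two averages share all but at most $2$ terms, so their difference in $\|\cdot\|_1$ is $O(1/L_n)$. On the shared terms $j$ with $n-j\ge N$, we have $g,kgh\notin D_{n-j}$ and $k,h\in K_{n-j}$, so $\|\mu_{n-j}(kgh)-k\mu_{n-j}(g)\|_1<\delta_{n-j}$. The terms with $n-j<N$ number at most $N$ and contribute $O(N/L_n)$. Since $L_n\to\infty$, the total tends to $0$ as $g\to\infty$, which proves the lemma.
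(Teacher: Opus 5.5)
Your final smoothed construction is correct and is essentially the paper's proof. The paper also builds an exhaustion with $K_nL_{n-1}K_n\subset L_n$ and $D_n\subset L_n$, shows that the shell index of $kgh$ differs from that of $g$ by at most one, and defines $\mu(g)$ as the uniform average of $\mu_n(g)$ over the window $\{\lfloor r/2\rfloor,\dots,r-1\}$ just below $r=r(g)$ (your scheme with $L_n=\lceil n/2\rceil$), estimating the shared and unshared terms exactly as you do. The one thing to state explicitly is that the window length must vary slowly, e.g.\ $|L_{n+1}-L_n|\le 1$. For an arbitrary sequence $L_n\to\infty$ with $n-L_n\to\infty$, the claim that adjacent windows share all but at most two indices can fail.
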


\begin{proof} 
 Pick  an increasing sequence of finite, symmetric
sets $e\in K_1\subset K_2\subset\cdots \subset G$ such that  $\bigcup_{n\ge1}K_n=G$. For each $n$, pick a map $\mu_n:G\longrightarrow\Prob(G)$ together with a finite set
$D_n\subset G$ such that
\begin{equation}\label{mun-control}
  \|\mu_n(kgh)-k\mu_n(g)\|_1<\frac{1}{2^n},
\end{equation}
whenever $k,h\in K_n$ and $g\notin D_n$.

Next consider an increasing sequence of finite sets $L_n\subset G$ such that
$\bigcup_nL_n=G$, $D_n\subset L_n$, and
\begin{equation}\label{Ln-stability}
  K_nL_{n-1}K_n\subset L_n.
\end{equation}
This can be done recursively, while also inserting the first $n$ elements of
a fixed enumeration of $G$ into $L_n$.  

For any $g\in G$, let
\begin{equation}
  r(g)=\min\set{n\ge1\,|\,g\in L_n}.
\end{equation}
and notice that  $r(g)\rightarrow\infty$ as $g\rightarrow\infty$.

For $r\ge4$, let
  $I_r:=\set{\lfloor r/2\rfloor,\lfloor r/2\rfloor+1,\ldots,r-1}$ and let $a_r$ be the uniform probability measure on it.  Define
\begin{equation}\label{diagonal-mu}
  \mu(g):=\sum_{n\in I_{r(g)}}a_{r(g)}(n)\mu_n(g)
\end{equation}
when $r(g)\ge4$, and define $\mu(g)$ arbitrarily for the remaining finitely
many elements $g$.

Fix $k,h\in G$, and choose $m$ with $k,h\in K_m$.  To simplify the writing, let
$r=r(g)$ and $r'=r(kgh)$. Next we show that if  $r$ is sufficiently large, then we have
\begin{equation}\label{r-change}
  \abs{r'-r}\leq 1.
\end{equation}
Indeed, since $k,h\in K_{r+1}$ and $g\in L_r$, relation
\eqref{Ln-stability} with $n=r+1$ gives $kgh\in L_{r+1}$ and therefore $r'\le r+1$.
If we assume by contradiction that $r'\le r-2$, then $kgh\in L_{r-2}$. As
$k^{-1},h^{-1}\in K_{r-1}$, using \eqref{Ln-stability} again, we further see that
\begin{equation*}
  g=k^{-1}(kgh)h^{-1}\in K_{r-1}L_{r-2}K_{r-1}\subset L_{r-1},
\end{equation*}
which contradicts the definition of $r$. Altogether, these show \eqref{r-change}.

We now compare the two averages defining $\mu(kgh)$ and $k\mu(g)$. For $n\in I_r\cap I_{r'}$ and $r$ sufficiently large, we have $n\ge m$ and
$n\le r-1$.  Since $D_n\subset L_n\subset L_{r-1}$ while
$g\notin L_{r-1}$, estimate \eqref{mun-control} applies.  Consequently,
using that all $\mu_n(k)$ have norm one,
\begin{equation*}\begin{split}
  \|\mu(kgh)-k\mu(g)\|_1
  &\leq \|a_{r'}-a_r\|_1
     +\sum_{n\in I_r\cap I_{r'}}
       \min\{a_r(n),a_{r'}(n)\}
       \|\mu_n(kgh)-k\mu_n(g)\|_1 \\
  &\leq \|a_{r'}-a_r\|_1
     +2^{-\lfloor r/2\rfloor+1}.\end{split}
\end{equation*}
If $\abs{r'-r}\le1$, an elementary comparison of the two adjacent uniform
measures gives
\begin{equation*}
  \|a_{r'}-a_r\|_1\leq \frac{8}{r-1}.
\end{equation*}
The right-hand side therefore tends to zero as $g\to\infty$.  This gives the desired conclusion. \end{proof}
\vskip 0.06in
The second preliminary result is the discrete version of the uniformity argument used in \cite[Proposition 2.5 and the proof of Theorem 4.7, Step 2]{De19}. We follow the same argument, which in the discrete setting admits the following simple formulation, and include the details for completeness. Intuitively, this says that we can often ignore amenable subgroups when proving bi-exactness, see also the proof of \cite[Lemma 15.2.6]{BO08}.

\begin{lemma}
\label{uniform-coset-reiter}
Let $G$ be a countable group and let $P<G$ be amenable.  For every
finite set $F\subset G$ and every $\delta>0$, there exists a map $\eta:G/P\longrightarrow\Prob(G)$ such that for every $g\in F$ we have 
\begin{equation}\label{ucreit}
  \sup_{z\in G/P}\|g\eta(z)-\eta(gz)\|_1<\delta.
\end{equation}
\end{lemma}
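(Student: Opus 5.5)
Since $P$ is amenable, we use a Følner set for $P$ and transport it along coset representatives; the error from moving between representatives is then absorbed by the Følner property. Fix a section $s:G/P\to G$ with $s(z)P=z$ for every $z$. Let $F'=F\cup F^{-1}$. The elements $s(gz)^{-1}gs(z)$, for $g\in F$ and $z\in G/P$, all lie in $P$, but in general they form an infinite subset of $P$. So a single Følner set cannot be almost invariant under all of them at once, and a naive construction will not give uniformity in $z$. This lack of uniformity is the main obstacle.

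To get around it, we use a Reiter function for $P$ on the whole group rather than on cosets. Since $P$ is amenable, the quasi-regular representation of $G$ on $\ell^2(G/P)$ weakly contains the trivial one. Equivalently, the action $G\curvearrowright G/P$ is amenable in the sense that there are finitely supported probability measures $\xi\in\Prob(G/P)$ with $\|g\xi-\xi\|_1<\delta$ for $g\in F$. That is the wrong direction for our purposes, so instead we aim at the following reformulation. A map $\eta:G/P\to\Prob(G)$ satisfying \eqref{ucreit} is the same thing as a $G$-equivariant-up-to-$\delta$ family. Such a family can be produced by inducing an approximately $P$-invariant mean from $P$ to $G$.

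Concretely, choose a finitely supported $\phi\in\Prob(P)$ that is almost invariant under \emph{right} translation, $\|\phi*\delta_p-\phi\|_1$ small for $p$ in a finite set. Then define
\[
\eta(z)=\sum_{p\in P}\phi(p)\,\delta_{s(z)p^{-1}}\quad\text{or similar.}
\]
This still involves the infinitely many cocycle values, so it does not solve the problem by itself. The better move is to choose $\eta(z)$ as a measure supported on the coset $z$ itself: set $\eta(z)=s(z)\cdot\nu$ with $\nu\in\Prob(P)$. Then
\[
g\eta(z)=gs(z)\nu=s(gz)\,c(g,z)\,\nu,\qquad c(g,z)=s(gz)^{-1}gs(z)\in P,
\]
so we need $\|c\nu-\nu\|_1<\delta$ for all $c$ in the set $\{c(g,z)\}$, which is infinite. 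Hence left-translates must be handled uniformly over all of $P$. That is impossible for nontrivial $P$ with a single $\nu$ unless $P$ is finite.

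The fix, following \cite[Proposition 2.5]{De19}, is to let $\nu$ depend on $z$ through the section, and to use that supremum norms over $z$ only involve one $g$ at a time. We pick a Følner-type family in $P$ that is equivariant for left translation under $P$: since $P$ is amenable, there is a $P$-equivariant, approximately invariant structure. More precisely, we use the fact that $P$ acts on itself, and define $\eta(z)$ as $\mu_0$ restricted and translated to the coset $z$, where $\mu_0\in\Prob(G)$ is a finitely supported measure. Concretely, take $\mu_0$ to be an approximately $F'$-invariant measure for the left action $G\curvearrowright G$, which exists relative to $P$ by amenability of $P$. Then set
\[
\eta(z)=\frac{1_{z}\cdot \mu_0'}{\|\cdot\|},
\]
averaging over $P$ with a right-Følner measure to make every coset receive mass. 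The verification is that restricting to cosets commutes with left translation: $g(1_z\mu)=1_{gz}(g\mu)$. Hence the estimate reduces to almost invariance of a global measure, which can be made uniform in $z$ provided the mass on each coset is controlled. Controlling that coset mass uniformly is the step I expect to be genuinely difficult. The approach I would try first is to work with $\ell^\infty(G)$-valued Følner functions $\sum_{p}\phi(p)\delta_{xp}$ and to normalize them coset by coset.
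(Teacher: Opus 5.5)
Your proposal does not reach a proof. It ends with the coset-mass normalization explicitly unresolved, and the construction it settles on cannot work in general. A finitely supported $\mu_0\in\Prob(G)$ that is approximately invariant under left translation by an arbitrary finite $F'$ exists only if $G$ itself is amenable. The phrase ``exists relative to $P$ by amenability of $P$'' is not meaningful. If $G$ were amenable, the lemma would be trivial with $\eta$ constant. In the intended application $G$ surjects onto the bi-exact, typically non-amenable, group $B$, so this route is unavailable. Relatedly, your claim that amenability of $P$ makes $\ell^2(G/P)$ weakly contain the trivial representation is false: that property is co-amenability of $P$ (take $P=\{e\}$ and $G$ non-amenable). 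You discard it anyway, but it should not be asserted.

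The missing idea is that you had the right ansatz $\eta(z)=s(z)\nu$ and abandoned it too early. The cocycle set $\{c(g,z)\}$ is infinite, but only finitely many $g$ are involved and the estimate is pointwise in $z$. So $\nu$ may depend on $z$, provided it varies slowly along $z\mapsto gz$. This is the paper's argument, a diagonal trick:
\begin{enumerate}
\item Make $F$ symmetric and exhaust $G/P$ by finite sets $L_1\subset L_2\subset\cdots$ with $FL_n\subset L_{n+1}$.
\item Put $r(z)=\min\{n : z\in L_n\}$, so that $|r(gz)-r(z)|\le 1$ for $g\in F$.
\item Each $C_n=\{c(g,z) : g\in F,\ z\in L_n\}\subset P$ is finite. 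Amenability of $P$ then gives $\mu_n\in\Prob(P)$ with $\|c\mu_n-\mu_n\|_1<\rho$ for all $c\in C_n$.
\item Average over a sliding window of levels:
\[
\eta(z)=\frac1m\sum_{n=r(z)+1}^{r(z)+m}s(z)\mu_n .
\]
\end{enumerate}
Take an index $n$ common to the windows of $z$ and $gz$. Then $z\in L_n$, so $\|gs(z)\mu_n-s(gz)\mu_n\|_1=\|c(g,z)\mu_n-\mu_n\|_1<\rho$. The two windows differ in at most two indices, which costs at most $2/m$. Hence $\sup_z\|g\eta(z)-\eta(gz)\|_1\le\rho+2/m<\delta$ for $\rho<\delta/2$ and $m>4/\delta$.

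The level-dependent F{\o}lner measures, smoothed by window averaging, are exactly what absorbs the infinitely many cocycle values. No global measure on $G$ and no coset-by-coset normalization is needed.
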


\begin{proof} Replacing $F$ by $F\cup F^{-1}\cup\{1\}$, we may assume that $F$ is symmetric
and contains the identity.  Put $X=G/P$, choose a section
$s:X\rightarrow G$, and write
\begin{equation}\label{coset-cocycle}
  gs(z)=s(gz)c(g,z),\quad \text{ where } c(g,z)\in P.
\end{equation}

Consider finite sets $L_1\subset L_2\subset\cdots\subset X$ so that $\bigcup_{n\ge1}L_n=X$
such that, for every $n$, we have 
\begin{equation}\label{coset-exhaustion}
  FL_n\subset L_{n+1}.
\end{equation}  

Consider
  $r(z):=\min\set{n\ge1\,|\,z\in L_n}$. By symmetry of $F$, relation \eqref{coset-exhaustion} implies for all $g\in F$ and $z\in X$ we have  
\begin{equation}\label{coset-r-change}
  \abs{r(gz)-r(z)}\leq 1.
\end{equation}

Now fix $\rho>0$ and $m\in\mathbb N$.  For each $n$,
the set $C_n=\set{c(g,z)\,|\,g\in F,\ z\in L_n}\subset P$ is finite.  By amenability of $P$, pick $\mu_n\in\Prob(P)$ such that for all $c\in C_n$ we have 
\begin{equation}\label{Folner-mn}
  \|c \mu_n-\mu_n\|_1<\rho
  .
\end{equation}

Let $\nu_n(z):=s(z)\mu_n\in\Prob(G)$. If $z\in L_n$ and $g\in F$, then using \eqref{coset-cocycle} and
\eqref{Folner-mn} we get
\begin{equation}\label{local-coset-control}
  \|g\nu_n(z)-\nu_n(gz)\|_1
  =\|c(g,z)\mu_n-\mu_n\|_1<\rho.
\end{equation}

For $z\in X$, define
\begin{equation*}\label{eta-shell-average}
  \eta(z)=\frac{1}{m}\sum_{n=r(z)+1}^{r(z)+m}\nu_n(z).
\end{equation*}

Fix $g\in F$.  By \eqref{coset-r-change}, the two index intervals occurring
in $g\eta(z)$ and $\eta(gz)$ have symmetric difference of cardinality at most
two.  On every common index $n$ we have $z\in L_n$, and therefore, using
\eqref{local-coset-control} for every $g\in F$ and $z\in X$ we have
\begin{equation*}
  \|g\eta(z)-\eta(gz)\|_1\leq \rho+\frac{2}{m}
  .\end{equation*}
  
Taking $\rho<\delta/2$ and $m>4/\delta$, one gets the desired conclusion.\end{proof}

\vskip 0.08in

We now focus on wreath-like products. Throughout the remainder of this paper we let $G \in WR(A, B\curvearrowright I)$ as in the Theorem \ref{mainthm} and $N= A^{(I)}= \ker(\varepsilon)$.  Here, and in what follows, $\varepsilon: G \longrightarrow B$ is the canonical epimorphism defining the wreath-like extension $G$.  Since $A$ is amenable, it follows that $N$ is amenable. We fix a section $s:B\longrightarrow G$, so that $ \varepsilon\circ s = \text{id}_B $. The choice of section gives us a twisted cocycle $ \alpha(b,c):=s(b)s(c)s(bc)^{-1} $. For each $b\in B$, the section also induces a map on $N$: $ \sigma_b:=\text{Ad}(s(b))|_N $, as well as the map $ \sigma_{b,i}:=\sigma_b|_{A_i}:A_i\to A_{bi} $.  For $g=xs(b)\in G$, with $x=(x_i)_{i\in I}\in N=A^{(I)}$, we define the support of $g$ by
\(
{\bf s}(g):=\{\,i\in I\mid x_i\neq e\,\}.
\)

Since $B$ is bi-exact, one can find a map $\mu : B \longrightarrow \Prob(B)$ such that 

\begin{equation}\label{biexact1}\lim_{g \rightarrow \infty}\|\mu(kgh)-k\mu(g)\|_1=0,\end{equation}
for all $k,h\in B$.
\vskip 0.1in

We continue with the following lemma using the bi-exactness of $B$, which is a discrete version of the lifting argument from \cite[Lemma 4.3]{De19}. In our setting, the argument admits a particularly simple formulation using the preceding lemma and a barycenter construction, which we include for completeness.
 
\begin{lemma}\label{quotientlift1} For every finite set $K\subset G$ and scalar $\delta>0$ there is a map $Q: B\longrightarrow \Prob(G) $ and a finite set $D\subset B$ such that for all $k,h \in K$ and $b\in B\setminus D$ we have \begin{equation}
\|Q(\varepsilon(k) b \varepsilon(h))- k Q(b)\|_1<\delta.
\end{equation}
  \begin{proof} Identify $B = G/N$. Since $N$ is amenable, by Lemma \ref{uniform-coset-reiter} there is a map $\eta: B \longrightarrow \Prob (G)$  such that for all $k\in K$ we have
  \begin{equation}\label{control1}
     \sup_{b\in B}\|k \eta(b)- \eta(\varepsilon(k)b)\|_1<\frac{\delta}{2}. 
  \end{equation}
Using \eqref{biexact1} one can find  a finite set $D\subset B$ such that  \begin{equation}\label{control-2}
    \|\mu(\varepsilon(k)b \varepsilon(h))-\varepsilon(k)\mu(b)\|_1<\frac{\delta}{2},
\end{equation}
for all $k,h\in K$ and $b\in B\setminus D$.

Now consider the barycenter map $Q: B \longrightarrow \Prob(G)$ given by 
\begin{equation*}
    Q(b)=\sum_{c\in B} \mu(b)(c)\eta(c).   
\end{equation*}
Using the definitions, the contractivity property, and inequalities \eqref{control1}-\eqref{control-2} we can see that 
\begin{equation*}\begin{split}
    \|Q(\varepsilon(k) b \varepsilon(h))- k Q(b)\|_1&\leq \|\mu(\varepsilon(k)b\varepsilon(h))- \varepsilon(k)\mu(b)\|_1+\sum_{c\in B} \mu (b)(c)\|k \eta(c)-\eta(\varepsilon(k)c) \|_1\\
    & \leq \frac{\delta}{2}+\frac{\delta}{2}=\delta,\end{split}
\end{equation*}
which yields the result.  \end{proof}  
\end{lemma}

As a corollary, this in fact shows that $G $ is bi-exact relative to $N$. Recall that, if $H<G$ is a subgroup, we say that $G$ is bi-exact relative to $H$ if $G$ is exact and there exists a map $
\mu:G\to \Prob(G)
$ such that, for every $g,h\in G$,
$$
\lim_{x\to\infty/H}
\|\mu(gxh)-g\cdot\mu(x)\|_1=0,
$$ where $x\to\infty/H$ means that $x$ eventually leaves every subset small relative to $H$. Here a subset of $G$ is small relative to $H$ if it is contained in a finite union of double translates $sHt$, $s,t\in G$. For our case, we have $x\to \infty/N $ iff $ \varepsilon(x)\to \infty $ in $B$.

\begin{cor}
    If $ A $ is amenable, and $B$ is bi-exact, then $G$ is bi-exact relative to $ N $.
\end{cor}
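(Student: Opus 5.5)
The plan is to combine Lemma \ref{quotientlift1} with a diagonalization in the spirit of Lemma \ref{unif-riter}, adapted to the relative notion of convergence $x\to\infty/N$, and to treat exactness separately.

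\emph{Exactness.} $G$ is an extension of the exact group $B$ by the amenable group $N$. Exactness is preserved under extensions, and amenable groups are exact, so $G$ is exact.

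\emph{Pulling back the maps of Lemma \ref{quotientlift1}.} Fix a finite set $K\subset G$ and $\delta>0$. Lemma \ref{quotientlift1} gives a map $Q_{K,\delta}:B\to\Prob(G)$ and a finite set $D\subset B$ with
\[
\|Q_{K,\delta}(\varepsilon(k)b\varepsilon(h))-kQ_{K,\delta}(b)\|_1<\delta
\]
for all $k,h\in K$ and $b\notin D$. Define $\mu_{K,\delta}:=Q_{K,\delta}\circ\varepsilon:G\to\Prob(G)$. Since $\varepsilon$ is a homomorphism, $\varepsilon(kxh)=\varepsilon(k)\varepsilon(x)\varepsilon(h)$, and therefore
\[
\|\mu_{K,\delta}(kxh)-k\mu_{K,\delta}(x)\|_1<\delta
\]
for all $k,h\in K$ and all $x\notin\varepsilon^{-1}(D)$. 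The set $\varepsilon^{-1}(D)$ is a finite union of cosets $s(d)N$, so it is small relative to $N$.

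\emph{Diagonalization on $B$.} I would not diagonalize on $G$ directly, because the exceptional sets are now infinite. Instead I run the proof of Lemma \ref{unif-riter} on $B$, which needs only one modification: the expected bound holds for the left translate by $k\in G$, while the translations inside the argument of the map are by $\varepsilon(k),\varepsilon(h)$. Concretely:
\begin{itemize}
\item Choose finite symmetric sets $K_n\uparrow G$ and maps $Q_n=Q_{K_n,2^{-n}}$ with exceptional sets $D_n\subset B$.
\item Choose finite sets $L_n\uparrow B$ with $D_n\subset L_n$ and $\varepsilon(K_n)L_{n-1}\varepsilon(K_n)\subset L_n$.
\item Set $r(b)=\min\{n\ge 1\mid b\in L_n\}$.
\item Define
\[
\tilde Q(b)=\frac{1}{|I_{r(b)}|}\sum_{n\in I_{r(b)}}Q_n(b),
\]
and define it arbitrarily for the finitely many $b$ with $r(b)<4$.
\end{itemize}
The estimate $|r(\varepsilon(k)b\varepsilon(h))-r(b)|\le 1$ and the comparison of the two averages go through verbatim. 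They show that $\|\tilde Q(\varepsilon(k)b\varepsilon(h))-k\tilde Q(b)\|_1\to 0$ as $b\to\infty$ in $B$, for each fixed $k,h\in G$.

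\emph{Conclusion.} Put $\mu=\tilde Q\circ\varepsilon$. As recalled before the corollary, $x\to\infty/N$ if and only if $\varepsilon(x)\to\infty$ in $B$, so
\[
\lim_{x\to\infty/N}\|\mu(kxh)-k\mu(x)\|_1=0
\]
for all $k,h\in G$. Together with exactness, this is bi-exactness of $G$ relative to $N$.

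The main point needing care is the diagonalization step, where the test elements range over $G$ but the parameter lives in $B$. This should be harmless, because every quantity depends on $k,h$ only through the finite sets $K_n$ and their images $\varepsilon(K_n)$.
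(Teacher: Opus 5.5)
Your proposal is correct and follows essentially the same route as the paper. Exactness comes from the extension. You apply Lemma \ref{quotientlift1} for each pair $(K,\delta)$, then run the diagonal argument of Lemma \ref{unif-riter} on $B$ with translations by $\varepsilon(k),\varepsilon(h)$, and finally set $\mu=\tilde Q\circ\varepsilon$, using that $x\to\infty/N$ iff $\varepsilon(x)\to\infty$. The paper only cites ``the same diagonal argument,'' whereas you spell out the modified diagonalization on $B$ explicitly, and those details check out.
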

\begin{proof}
    Since $N$ is amenable and $B$ is exact, $G$ is also exact. Fix a finite subset $K\subset G$ and $\delta>0$. By Lemma~\ref{quotientlift1}, there exists a map $ Q_{K,\delta} : B\to \text{Prob}(G) $ and a finite subset $ D_{K,\delta}\subset B $ such that
    \[ \| Q_{K,\delta}(\varepsilon(k)b\varepsilon(h))-kQ_{K,\delta}(b) \|_1<\delta,\quad \text{ for all }k,h\in K,\;b\in B\setminus D_{K,\delta}.  \]
    The same diagonal argument as in Lemma~\ref{unif-riter}, yields a map $Q:B\to \text{Prob}(G)$ such that
    \[ \lim_{ b\to \infty}\|Q( \varepsilon(k)b\varepsilon(h))-kQ(b) \|_1=0.\]
    Now, define $ \mu: G\to \text{Prob}(G) $, $\mu(g):=Q(\varepsilon(g))$. Then
     \begin{align*}
\|\mu(kgh)-k\mu(g)\|_1
&=
\|
Q(\varepsilon(k)\varepsilon(g)\varepsilon(h))
-kQ(\varepsilon(g))
\|_1 \end{align*}
which tends to $0$ as $g \to \infty/N$, since $ g\to \infty/N $ implies $ \varepsilon(g)\to \infty $.
\end{proof}
\vskip 0.05in
Alternatively, the corollary follows directly from the array characterization of relative bi-exactness. Indeed, as observed in \cite[Example 2.7(A)]{CSU11}, a proper array $r:B\to\ell^2(B)$ pulls back through $\varepsilon$ to an array $\widetilde r(g)=r(\varepsilon(g))$ for all $g\in G$, with values in $\ell^2(G/N)$, which is proper relative to $N$. Since $N$ is amenable, the quasi-regular representation $\lambda_{G/N}$ is weakly contained in $\lambda_G$. The conclusion therefore also follows from the array characterization of relative bi-exactness from \cite[Remark 2.4]{CSU11} and \cite[Proposition 2.7]{PV12}.

\vskip 0.1in

The remainder of the proof builds on the weight strategy for wreath products originating in \cite[Corollary 15.3.6]{BO08} and developed further in \cite[Proposition 4.4 and Theorem 4.7]{De19}. In the present wreath-like setting, however, the presence of cocycle terms arising from the extension makes the coordinate transformations more technically involved, so the usual length-function argument does not apply directly. We develop below all necessary modifications needed in this setting.

\begin{lemma}
\label{properness1}
Let $X$ be a countable set and let $\mathcal T$ be a finite family of
bijections of $X$.  Then one can find a proper map $\rho:X\longrightarrow\mathbb N $
such that for every $T\in\mathcal T$ and $\xi\in X$ we have 
\begin{equation}\label{eq:slow-weight}
  \abs{\rho(T\xi)-\rho(\xi)}\le1.
\end{equation}
Here, proper means
$\set{\xi\in X\,|\,\rho(\xi)\le R}$ is finite for every $R\ge 0$.
\end{lemma}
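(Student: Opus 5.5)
The idea is to use a word-length function on $X$ with respect to the finitely many maps $\mathcal T\cup\mathcal T^{-1}$, measured from an exhausting sequence of finite sets, mirroring the exhaustion arguments in Lemma~\ref{unif-riter} and Lemma~\ref{uniform-coset-reiter}. If $X$ is finite, any constant map works, so assume $X$ is infinite. Enumerate $X=\{x_1,x_2,\ldots\}$ and put $\mathcal S=\mathcal T\cup\mathcal T^{-1}\cup\{\id\}$, a finite symmetric set of bijections. Define finite sets recursively by
\[
L_1=\{x_1\},\qquad L_{n+1}=\mathcal S(L_n)\cup\{x_{n+1}\}=\bigcup_{S\in\mathcal S}S(L_n)\cup\{x_{n+1}\}.
\]
Each $L_n$ is finite because $\mathcal S$ is finite. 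Since $\id\in\mathcal S$, the sets increase, and $\bigcup_n L_n=X$ because $x_n\in L_n$. Set
\[
\rho(\xi)=\min\{n\ge1\mid \xi\in L_n\}.
\]

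Properness is immediate: $\{\xi\mid\rho(\xi)\le R\}=L_{\lfloor R\rfloor}$, which is finite (and empty if $R<1$).

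For the Lipschitz estimate \eqref{eq:slow-weight}, fix $T\in\mathcal T$ and $\xi$ with $\rho(\xi)=n$. Since $\xi\in L_n$ and $T\in\mathcal S$, we get $T\xi\in L_{n+1}$, so $\rho(T\xi)\le\rho(\xi)+1$. Applying the same argument to $T^{-1}\in\mathcal S$ at the point $T\xi$ gives $\xi=T^{-1}(T\xi)\in L_{\rho(T\xi)+1}$, so $\rho(\xi)\le\rho(T\xi)+1$. Together these give $|\rho(T\xi)-\rho(\xi)|\le1$.

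There is no real obstacle here. The only points needing care are to close the sets under the inverses as well as under $\mathcal T$, which is what makes the lower bound work, and to keep each $L_n$ finite while still exhausting $X$, which the enumeration step ensures. This is the same device as \eqref{Ln-stability} and \eqref{coset-exhaustion} used earlier.
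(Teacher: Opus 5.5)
Your proposal is correct and follows essentially the same argument as the paper: exhaust $X$ by finite sets closed one step at a time under $\mathcal T\cup\mathcal T^{-1}\cup\{\id\}$, let $\rho$ be the first index of entry, and get the two-sided bound by applying the one-step inclusion to both $T$ and $T^{-1}$. Your separate treatment of finite $X$ is a harmless extra detail that the paper leaves implicit.
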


\begin{proof}
Consider the finite symmetric family $\mathcal S:=\mathcal T\cup\mathcal T^{-1}\cup\{\id_X\}$. Enumerate $X=\{\xi_1,\xi_2,\ldots\}$.  Next we construct recursively increasing finite sets $F_n\subset X$ with
   $\xi_1,\ldots,\xi_n\in F_n$ and $U(F_n)\subset F_{n+1}$ for every $U\in\mathcal S$.
For example, after choosing $F_n$, take
$F_{n+1}=F_n\cup\{\xi_{n+1}\}
          \cup\bigcup_{U\in\mathcal S}U(F_n)$. 

Let $\rho(\xi):=\min\set{n\ge1 \,|\,\xi\in F_n}.$
Notice that $\set{\xi\,|\,\rho(\xi)\le n}=F_n$, which entails $\rho$ is proper.  If
$T\in\mathcal T$ and $\rho(\xi)=n$, then $T\xi\in F_{n+1}$ and hence
$\rho(T\xi)\le n+1$.  Using the same argument for $T^{-1}$ at $T\xi$ yields
$\rho(\xi)\le\rho(T\xi)+1$.  Altogether, these yield \eqref{eq:slow-weight}.
\end{proof}

\vskip 0.04in
We continue our proof by introducing more notation. Fix finite sets
$K\subset G$ and $L\subset B$ and assume that $L\ne\emptyset$. Consider the coordinate space
\begin{equation}\label{Omega-def}
  \Omega=\bigsqcup_{i\in I}A_i
  =\set{(i,z)\,|\,i\in I,\ z\in A_i}.
\end{equation}

Fix $k,h\in K$,  $b\in L$ and let $k=a s(p)$, $g=xs(b)$, and $h=c s(q)$
where $p=\eps(k)$ and $q=\eps(h)$.  
Then we see that 
 $kgh=y s(pbq)$, where 
\begin{equation}\label{kgh}\begin{split}y&=a\,\sigma_p(x)\,r_{k,h,b}, \text{ and}\\r_{k,h,b}&=\alpha(p,b)\,\sigma_{pb}(c)\,\alpha(pb,q)\in N.\end{split}
\end{equation}
In particular, for every $i\in I$ we have  $y_{pi}=a_{pi}\,\sigma_{p,i}(x_i)\,(r_{k,h,b})_{pi}$. This induces the bijection $ T_{k,h,b}:\Omega\to \Omega $, $ T_{k,h,b}(i,z):=(pi, a_{pi}\,\sigma_{p,i}(z)\,(r_{k,h,b})_{pi} ) $.

Using Lemma \ref{properness1} for the finite collection of bijections  $\mathcal T_{K,L}
=\set{T_{k,h,b}\,|\,k,h\in K,\ b\in L}$ one can find  a proper map $\rho:=\rho_{K,L}:\Omega\longrightarrow\mathbb N$
such that for all $T\in \mathcal T_{K,L}
$ and $\omega \in \Omega$ 
we have

\begin{equation}\label{rho-cont}
  |\rho(T\omega)-\rho(\omega)|\leq 1.
\end{equation}

For each $g=xs(b)\in G$, define its extended support by  $\widetilde{\bf s}(g)=\set{(i,x_i)\,|\,i\in I,\ x_i\ne e_i}\subset\Omega$. Note that $\abs{\widetilde{\bf s}(g)}=\abs{{\bf s}(g)}$.  Consider the following finite positive measure on
$I$
\begin{equation}\label{base-sum}
  Z_g=\sum_{i\in {\bf s}(g)}\rho(i,x_i)\delta_i,
\end{equation}
and note that 
$\|Z_g\|_1
  =\sum_{i\in {\bf s}(g)}\rho(i,x_i)$.
\vskip 0.05in

\begin{lemma}\label{comparison-fin}
There is a constant $C_{K,L}<\infty$ such that
\begin{equation}\label{comparison}
  \|Z_{kgh}-\eps(k)Z_g\|_1
  \le \abs{{\bf s}(g)}+C_{K,L}
\end{equation}
for all $k,h\in K$ and all $g\in G$ satisfying $\eps(g)\in L$.
\end{lemma}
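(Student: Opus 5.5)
Fix $k,h\in K$ and $g=xs(b)$ with $b\in L$, and write $kgh=ys(pbq)$ as in \eqref{kgh}. We compare the measures coordinatewise on $I$. The translate $\eps(k)Z_g=pZ_g$ is the measure $\sum_{i\in{\bf s}(g)}\rho(i,x_i)\delta_{pi}$. For each $i\in I$, the coordinate of $kgh$ at $pi$ is $y_{pi}=a_{pi}\,\sigma_{p,i}(x_i)\,(r_{k,h,b})_{pi}$, which is the second component of $T_{k,h,b}(i,x_i)$. The natural plan is to split $I$ into two sets: the indices $i$ where the correction terms $a_{pi}$ and $(r_{k,h,b})_{pi}$ are both trivial, and the finite exceptional set
\[E_{k,h,b}=p^{-1}\bigl({\bf s}(a)\cup{\bf s}(r_{k,h,b})\bigr).\]
The set $E_{k,h,b}$ is finite because $a$ and $r_{k,h,b}$ are elements of $N=A^{(I)}$. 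As $k,h$ range over $K$ and $b$ over $L$, only finitely many such sets occur. Let $M$ be the maximum of $|E_{k,h,b}|$ over these finitely many choices.

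\emph{Indices outside $E_{k,h,b}$.} Here $y_{pi}=\sigma_{p,i}(x_i)$. Since $\sigma_{p,i}$ is an isomorphism $A_i\to A_{pi}$, we have $pi\in{\bf s}(kgh)$ if and only if $i\in{\bf s}(g)$. Whenever $i\in{\bf s}(g)$, the $\delta_{pi}$-coefficients of $Z_{kgh}$ and $pZ_g$ are $\rho(T_{k,h,b}(i,x_i))$ and $\rho(i,x_i)$, and by \eqref{rho-cont} these differ by at most $1$. Summing over such $i$ contributes at most $|{\bf s}(g)|$.

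\emph{Indices in $E_{k,h,b}$.} Here the coefficients are bounded. If $i\in{\bf s}(g)$, then $pZ_g$ has coefficient $\rho(i,x_i)$ at $pi$. If also $y_{pi}\neq e$, then $Z_{kgh}$ has coefficient $\rho(T_{k,h,b}(i,x_i))\le \rho(i,x_i)+1$ there. Both coefficients are individually large, but their difference is still at most $1$ by \eqref{rho-cont}. So the only troublesome cases are those where exactly one of the two coordinates is trivial. In the first case, $x_i=e$ but $y_{pi}\neq e$. Then the coefficient is $\rho(T_{k,h,b}(i,e))$, which ranges over a finite set of values as $k,h,b$ vary. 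In the second case, $x_i\neq e$ but $y_{pi}=e$. Then $T_{k,h,b}(i,x_i)=(pi,e)$, so $\rho(i,x_i)\le\rho(pi,e)+1$, again bounded independently of $g$.

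\emph{Choice of constant.} Set
\[C_{K,L}=M\cdot\Bigl(1+\max\rho(T_{k,h,b}(i,e))+\max\rho(pi,e)\Bigr),\]
with the maxima taken over the finitely many $i\in E_{k,h,b}$ and $k,h\in K$, $b\in L$. Then the exceptional indices contribute at most $C_{K,L}$. Adding the two contributions gives \eqref{comparison}.

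The main obstacle is bookkeeping rather than any deep point. One must check that the case analysis covers every index $j$ in the support of either measure; since $p$ is a bijection of $I$, every $j$ is of the form $pi$. One must also make sure that the bound on exceptional indices uses the properness of $\rho$ only through evaluating $\rho$ at finitely many explicit points $(i,e)$ and $(pi,e)$. Finally, the uniformity of $C_{K,L}$ relies on $K$ and $L$ being finite, so that only finitely many triples $(k,h,b)$ occur.
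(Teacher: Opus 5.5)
Your proposal is correct and follows essentially the same argument as the paper. You split off the finite set of exceptional coordinates where $a$ or $r_{k,h,b}$ is nontrivial, use \eqref{rho-cont} on the remaining coordinates to get the $\abs{{\bf s}(g)}$ term, bound the exceptional contributions by finitely many values of $\rho$, and take the maximum over the finite family $\mathcal T_{K,L}$. The differences are cosmetic: you work with the possibly larger set $p^{-1}({\bf s}(a)\cup{\bf s}(r_{k,h,b}))$ rather than $E_T$, and you bound $\rho(i,x_i)$ by $\rho(pi,e_{pi})+1$ rather than writing it as $\rho(T^{-1}(pi,e_{pi}))$.
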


\begin{proof}
Fix $k,h\in K$, $b\in L$, and let $T=T_{k,h,b}$.  Let
$p=\eps(k)$.  For $g=xs(b)$ and $kgh=y s(pbq)$, equation
\eqref{kgh} yields that for all $i\in I$ we have $T(i,x_i)=(pi,y_{pi})$, where $y_{pi}=a_{pi}\,\sigma_{p,i}(x_i)\,(r_{k,h,b})_{pi}$.

Consider the set $E_T=\set{i\in I\,|\,T(i,e_i)\ne(pi,e_{pi})}$  and notice it is finite.  
  Indeed, since 
  $T(i,e_i)=\left(pi,a_{pi}(r_{k,h,b})_{pi}\right)$ then 
 $E_T\subset p^{-1}( {\bf s}(a)\cup {\bf s}({r_{k,h,b}}))$.

If $i\notin E_T$, then $T(i,e_i)=(pi,e_{pi})$.  Since the restriction of $T$
to the fiber $A_i$ is a bijection onto the fiber $A_{pi}$, we have $x_i=e_i$ if and only if $ y_{pi}=e_{pi}$.

Whenever these two coordinates are nontrivial, \eqref{rho-cont} yields
\begin{equation}\label{coordinate-diff}
  \abs{\rho(pi,y_{pi})-\rho(i,x_i)}\le1.
\end{equation}

For $i\in E_T$, there are three possibilities which we analyze individually.  

If $x_i\neq e_i$ and $y_{pi}\neq e_{pi}$
are nontrivial, \eqref{coordinate-diff} still
holds.  

If $x_i=e_i$ and $y_{pi}\ne e_{pi}$, the contribution at the
coordinate $pi$ equals  $\rho(T(i,e_i))$.  

If
$x_i\ne e_i$ and $y_{pi}=e_{pi}$, then
$(i,x_i)=T^{-1}(pi,e_{pi})$ and hence
the contribution equals 
$\rho(T^{-1}(pi,e_{pi}))$.

As $i\rightarrow pi$ is a bijection, the preceding
case analysis shows that
\begin{equation}\label{eq:C-T-bound}
  \|Z_{kgh}-pZ_g\|_1\le \abs{{\bf s}(g)}+C_T,
\end{equation}
where $C_T:=
  \sum_{i\in E_T}\left(1+\rho(T(i,e_i))+\rho(T^{-1}(pi,e_{pi}))
  \right)$.

Since $\mathcal T_{K,L}$ is a finite family then $C_{K,L}:=\max_{T\in\mathcal T_{K,L}}C_T<\infty$
 and works for all $k,h\in K$ and $b\in L$.
\end{proof}

\begin{lemma}\label{properness}
For every finite $K\subset G$, $L\subset B$ we have 
\begin{equation}\label{sup-over-W}
  \lim_{g \to \infty, \varepsilon(g)\in L }\frac{\abs{{\bf s}(g)}}{\|Z_g\|_1}=0.
\end{equation}
\end{lemma}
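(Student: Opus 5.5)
We argue directly from the properness of $\rho=\rho_{K,L}$ on $\Omega$; the finite set $K$ only enters through the choice of $\rho$. Fix $\epsilon>0$ and put $R:=\lceil 1/\epsilon\rceil$. By properness, the set $F_R:=\set{\omega\in\Omega\,|\,\rho(\omega)\le R}$ is finite, and we set $M_R:=|F_R|$. For $g=xs(b)$ with $b\in L$, split ${\bf s}(g)$ according to whether $(i,x_i)\in F_R$ or not. Since the map $i\mapsto(i,x_i)$ from ${\bf s}(g)$ to $\widetilde{\bf s}(g)$ is injective, at most $M_R$ indices $i\in{\bf s}(g)$ have $(i,x_i)\in F_R$. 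Every other index contributes $\rho(i,x_i)>R$ to $\|Z_g\|_1$. Also $\rho\ge1$ everywhere, since $\rho$ takes values in $\mathbb N$ starting at $1$ by the construction in Lemma \ref{properness1}. We therefore get
\[
\|Z_g\|_1\ \ge\ R\bigl(\abs{{\bf s}(g)}-M_R\bigr),
\qquad\text{hence}\qquad
\frac{\abs{{\bf s}(g)}}{\|Z_g\|_1}\le \frac{1}{R}+\frac{M_R}{\|Z_g\|_1}.
\]

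It remains to show that $\|Z_g\|_1\to\infty$ as $g\to\infty$ with $\eps(g)\in L$. Since $L$ is finite, the elements $g=xs(b)$ with $b\in L$ and $\|Z_g\|_1\le S$ have $\abs{{\bf s}(g)}\le S$, because each term of $Z_g$ is at least $1$. Their extended supports $\widetilde{\bf s}(g)$ are contained in the finite set $F_S$. The element $g$ is determined by $b$ together with $\widetilde{\bf s}(g)$, since $x$ is recovered from its nontrivial coordinates. So there are at most $|L|\cdot 2^{|F_S|}$ such $g$. Consequently, for any $S$, only finitely many $g$ with $\eps(g)\in L$ satisfy $\|Z_g\|_1\le S$. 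This gives $\|Z_g\|_1\to\infty$ along the limit, so the second term above tends to $0$.

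Combining the two steps, $\limsup \abs{{\bf s}(g)}/\|Z_g\|_1\le 1/R\le\epsilon$, and since $\epsilon>0$ was arbitrary the limit in \eqref{sup-over-W} is $0$. When ${\bf s}(g)=\emptyset$ the quotient is $0/0$; we interpret it as $0$, and in any case such $g$ are finite in number since $L$ is finite. The only point needing care is the finiteness count in the second step, namely that elements with $\eps(g)\in L$ are determined by finitely many data from $F_S$. No genuine obstacle arises beyond that bookkeeping.
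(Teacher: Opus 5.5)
Your proof is correct and follows essentially the same route as the paper. The paper also shows $\|Z_g\|_1\to\infty$ by observing that $\widetilde{\bf s}(g)\subset\Omega_R$ leaves only finitely many $g$ with $\varepsilon(g)\in L$, and then uses $\|Z_g\|_1\ge R(|{\bf s}(g)|-|\Omega_R|)$ before letting $g\to\infty$ and then $R\to\infty$. Your explicit count $|L|\cdot 2^{|F_S|}$ and your remark on the $0/0$ case only make that bookkeeping more explicit.
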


\begin{proof}
Recall that $\rho$ depends on the fixed finite sets $K$ and $L$.  

First we claim that \begin{equation}\label{lim-inf}\lim_{g \to\infty, \varepsilon(g)\in L }\| Z_g\|_1=\infty.\end{equation} 

To see this, fix $R>0$. Since $\rho$ is proper, the set  $\Omega_R=\set{\omega\in\Omega\,|\,\rho(\omega)\le R}$ is 
finite.

\noindent Now suppose that $\eps(g)\in L$ and $\|Z_g\|_1\le R$.  Thus $\widetilde{\rm s}_g\subset\Omega_R$ and since $L$ is finite,
there are only finitely many elements $g=xs(b)$ satisfying
$\eps(g)\in L$ and $\|Z_g\|_1\leq R$.  This proves \eqref{lim-inf}.
\vskip 0.03in
To see \eqref{sup-over-W}, fix $R\ge1$.  Since at most $\abs{\Omega_R}$ elements 
$\omega\in \widetilde{\bf s} (g)$ satisfy  $\rho(\omega)\le R$ we have   
\begin{equation*}
  \|Z_g\|_1\ge R(\abs{{\bf s}_g}-\abs{\Omega_R}),
\end{equation*}
and hence
\begin{equation}\label{ratio-bound}
  \frac{\abs{{\bf s} (g)}}{\|Z_g\|_1}
  \le \frac1R+\frac{\abs{\Omega_R}}{\|Z_g\|_1}.
\end{equation}
Letting $g\to\infty$ with $\eps(g)\in L$ and then  $R\to\infty$ in the inequality \eqref{ratio-bound}, we get
\eqref{sup-over-W}.
\end{proof}

\begin{prop}
\label{nu-comparison} Now fix a basepoint  $i_0\in I$ and we let  $\widehat Z_g=Z_g+\delta_{i_0}$. Then the map $\nu:G\to\Prob(I)$ given by $\nu_g=\frac{\widehat Z_g}{\|\widehat Z_g\|_1}$
for all $g\in G$ satisfies the following
\begin{equation}\label{nu-asymptotic}
  \lim_{g \to \infty, \varepsilon(g)\in L}
  \|\nu_{kgh}-\eps(k)\nu_g\|_1
  =0
\end{equation}
for all $k,h\in K$.
\end{prop}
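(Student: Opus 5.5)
The plan is to combine Lemma~\ref{comparison-fin} with Lemma~\ref{properness} through the standard normalization estimate for $\ell^1$-vectors. Fix $k,h\in K$ and write $p=\eps(k)$. I will use the elementary fact that for nonzero positive finite measures $\xi,\zeta$ on $I$,
\begin{equation*}
\Bigl\|\frac{\xi}{\|\xi\|_1}-\frac{\zeta}{\|\zeta\|_1}\Bigr\|_1\le \frac{2\|\xi-\zeta\|_1}{\|\zeta\|_1}.
\end{equation*}
This follows by adding and subtracting $\xi/\|\zeta\|_1$ and using $\bigl|\|\xi\|_1-\|\zeta\|_1\bigr|\le\|\xi-\zeta\|_1$. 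Translation by $p$ preserves both mass and positivity, so $p\nu_g=p\widehat Z_g/\|\widehat Z_g\|_1$. Applying the fact with $\xi=\widehat Z_{kgh}$ and $\zeta=p\widehat Z_g$ gives
\begin{equation*}
\|\nu_{kgh}-p\nu_g\|_1\le\frac{2\|\widehat Z_{kgh}-p\widehat Z_g\|_1}{\|Z_g\|_1+1}.
\end{equation*}

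Next I bound the numerator. The basepoint terms contribute $\|\delta_{i_0}-\delta_{pi_0}\|_1\le 2$. By Lemma~\ref{comparison-fin}, whenever $\eps(g)\in L$,
\begin{equation*}
\|\widehat Z_{kgh}-p\widehat Z_g\|_1\le\|Z_{kgh}-pZ_g\|_1+2\le|{\bf s}(g)|+C_{K,L}+2.
\end{equation*}
Combining the two displays yields
\begin{equation*}
\|\nu_{kgh}-p\nu_g\|_1\le 2\,\frac{|{\bf s}(g)|}{\|Z_g\|_1}+\frac{2(C_{K,L}+2)}{\|Z_g\|_1+1}.
\end{equation*}
The first term can only appear when $Z_g\neq0$; if ${\bf s}(g)=\emptyset$ it is simply absent.

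Finally, let $g\to\infty$ with $\eps(g)\in L$. The first term tends to $0$ by Lemma~\ref{properness}, equation \eqref{sup-over-W}. The second term tends to $0$ by \eqref{lim-inf}, which was established inside the proof of Lemma~\ref{properness}. Both limits are taken over the same filter, and the constant $C_{K,L}$ does not depend on $g$, so \eqref{nu-asymptotic} follows.

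The only genuine content is Lemma~\ref{comparison-fin}. What remains to check is that the normalization inequality uses the right denominator, namely the mass of $p\widehat Z_g$, which equals $\|Z_g\|_1+1$. I expect no serious obstacle here.
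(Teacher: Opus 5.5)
Your proposal is correct and follows essentially the same route as the paper. Both arguments apply the normalization inequality to $\widehat Z_{kgh}$ and $p\widehat Z_g$, bound the numerator by $|{\bf s}(g)|+C_{K,L}+2$ via Lemma~\ref{comparison-fin}, and conclude with Lemma~\ref{properness}. Your splitting of the final bound and your explicit appeal to \eqref{lim-inf} for the constant term only spell out what the paper's last step leaves implicit.
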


\begin{proof}
Fix $k,h\in K$ and put $p=\eps(k)$.  Since $\widehat Z_{kgh}=Z_{kgh}+\delta_{i_0}$, $p\widehat Z_g=pZ_g+\delta_{pi_0}$, Lemma~\ref{comparison-fin} gives
\begin{equation}\label{comparison2}
  \|\widehat Z_{kgh}-p\widehat Z_g\|_1
  \le \abs{{\bf s}(g)}+C_{K,L}+2.
\end{equation}

Using \eqref{comparison2} together with the elementary inequality $\|
    \frac{\lambda}{\|\lambda\|_1}
    -\frac{\zeta}{\|\zeta\|_1}
  \|_1
  \le
  \frac{2\|\lambda-\zeta\|_1}{\|\zeta\|_1}$ for the positive measures $\lambda = \widehat Z_{kgh}$ and $\zeta=p\widehat Z_g$ we obtain
\begin{equation*}
  \|\nu_{kgh}-p\nu_g\|_1
\leq\frac{2(\abs{{\bf s}(g)}+C_{K,L}+2)}{\|Z_g\|_1+1}.
\end{equation*}
Finally, using
Lemma~\ref{properness} above, we get the desired conclusion.
\end{proof}

\begin{prop}\label{supportlift2} For every finite subsets $K\subset G$ and $L\subset B$ and every $\delta>0$ there is  map $M: G \longrightarrow \Prob(G)$ and a finite set $D_M\subset G$ such that 

\begin{equation}
    \|M(kgh)-kM(g)\|_1<\delta,
\end{equation}
for all $k,h\in K$ and $g\in G\setminus D_M$ such that $\varepsilon(g)\in L$.   
\end{prop}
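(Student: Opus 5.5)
Starting from the map $\nu:G\to\Prob(I)$ of Proposition~\ref{nu-comparison}, I will lift measures on $I$ to measures on $G$ using Lemma~\ref{uniform-coset-reiter}, applied to the amenable stabilizers; this is the barycenter trick from Lemma~\ref{quotientlift1}. Since $\nu$ only sees $\varepsilon(k)$, the lift has to turn $B$-equivariance on $I$ into $G$-equivariance on $\Prob(G)$.

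\textbf{Step 1: lifting from orbits.} Decompose $I=\bigsqcup_{j\in J}B\cdot i_j$ into $B$-orbits, and let $B_j=\mathrm{Stab}_B(i_j)$, which is amenable by hypothesis. Put $P_j=\varepsilon^{-1}(B_j)<G$. Then $P_j$ is an extension of the amenable group $N$ by the amenable group $B_j$, so $P_j$ is amenable. Moreover $G/P_j\cong B/B_j\cong B\cdot i_j$, and this identification is $G$-equivariant when $G$ acts on the orbit through $\varepsilon$.

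Fix $\delta>0$ and $K$. Apply Lemma~\ref{uniform-coset-reiter} to each $P_j$ with the finite set $K$. This gives maps $\eta_j:G/P_j\to\Prob(G)$ with $\sup_z\|k\eta_j(z)-\eta_j(kz)\|_1<\delta/2$ for $k\in K$. Gluing them defines $\eta:I\to\Prob(G)$ satisfying
\[\sup_{i\in I}\|k\eta(i)-\eta(\varepsilon(k)i)\|_1<\delta/2 \quad\text{for all } k\in K.\]
There may be infinitely many orbits, but $K$ is fixed and Lemma~\ref{uniform-coset-reiter} is applied once per orbit, so the bound is uniform in $i$.

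\textbf{Step 2: the barycenter.} Define
\[M(g)=\sum_{i\in I}\nu_g(i)\,\eta(i).\]
By contractivity, for $k,h\in K$,
\[\|M(kgh)-kM(g)\|_1\le \|\nu_{kgh}-\varepsilon(k)\nu_g\|_1+\sum_i\nu_g(i)\,\|k\eta(i)-\eta(\varepsilon(k)i)\|_1.\]
Here I reindex $\sum_i\nu_g(i)\eta(\varepsilon(k)i)=\sum_i(\varepsilon(k)\nu_g)(i)\eta(i)$, exactly as in Lemma~\ref{quotientlift1}. The second term is below $\delta/2$. By \eqref{nu-asymptotic}, the first term is below $\delta/2$ outside a set that is finite among the $g$ with $\varepsilon(g)\in L$; take $D_M$ to be this finite set.

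\textbf{Main obstacle.} The point needing the most care is the definition of $D_M$, namely unpacking the limit in \eqref{nu-asymptotic}. It holds over $g\to\infty$ with $\varepsilon(g)\in L$, for each of the finitely many pairs $k,h\in K$. So there is a finite set $D_M\subset\varepsilon^{-1}(L)$ outside of which the bound holds for all pairs simultaneously. I also need $\nu$ to be built from $\rho_{K,L}$ for these same $K,L$, which it is. The statement asks for nothing when $\varepsilon(g)\notin L$, so no further control is needed there.
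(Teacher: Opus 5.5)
Your proposal is correct and is essentially the paper's own proof. It uses the same orbit decomposition with the amenable groups $P_\beta=\varepsilon^{-1}(H_\beta)$, the same glued map $\eta:I\to\Prob(G)$ from Lemma~\ref{uniform-coset-reiter}, the same barycenter $M(g)=\sum_i\nu_g(i)\eta(i)$, and the same two-term estimate, with $D_M$ taken as the union over the finitely many pairs $k,h\in K$ of the exceptional sets from Proposition~\ref{nu-comparison}.
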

\begin{proof} First, decompose $ I  $ into $B$-orbits, $ I = \bigsqcup_\beta I_\alpha \simeq \bigsqcup_\beta B/H_\beta $. Let $P_\beta=\varepsilon^{-1}(H_\beta)<G$ and notice that $P_\beta$ is amenable for each $\beta$. Moreover, we have that $G/P_\beta=B/H_\beta=I_\beta$.
Using Lemma \ref{uniform-coset-reiter} for $P_\beta<G$ one can find a map $\eta_\beta : I_\beta \to \Prob(G)$ such that \begin{equation}\label{ineq2}
    \sup_{i\in I_\beta}\|k\eta_\beta (i)-\eta_\beta(\varepsilon(k) i) \|_1<\frac{\delta}{2}.
\end{equation}
Put these $\eta_\beta$'s together, we obtain a map $\eta:I \to \Prob(G)$, $\eta(i):=\eta_\beta(i)$ for $i\in I_\beta$.
Now define \[M(g)=\sum_{i\in I} \nu_g(i)\eta(i).\]
Using Proposition \ref{nu-comparison} one can find a finite set $D_M \subset G$ such that for all $g\in G \setminus D_M$ with $\varepsilon(g)\in L$ and $k,h\in K$ we have 
\begin{equation}\label{ineq3}
    \| \nu_{kgh}-\varepsilon(k)\nu_g\|_1\leq \frac{\delta}{2}.
\end{equation}
Letting $p=\varepsilon(k)$, and using basic estimates and \eqref{ineq2} and \eqref{ineq3} see that 
\begin{equation}\begin{split}\|M(kgh)-kM(g)\|_1&\leq \|\sum_i \nu_{kgh}(i)\eta(i)-\sum_i(p\nu_g)(i) \eta(i)\|_1+\|\sum_i \nu_g(i)\eta(pi)-\sum_i\nu_g(i)k\eta(i)\|_1\\& \leq \|\nu_{kgh}-p\nu_g\|_1+\sup_i\|\eta(pi)-k\eta(i)\|\\&\leq  \frac{\delta}{2}+\frac{\delta}{2}=\delta,\end{split}
\end{equation}
as desired.
\end{proof}

\vskip 0.08in
With these preliminaries at hand we are now ready to prove our main result.

\noindent \emph{Proof of Theorem \ref{mainthm}.} We will first show that the assumption in Lemma \ref{unif-riter} holds. Fix $K\subset G$ finite and $\epsilon>0$. Also let $\delta>0$ and $m\in \mathbb N$ such that
\begin{equation}\label{treshhold}
    2\delta +\frac{2}{m}<\epsilon.
\end{equation}

By Lemma \ref{quotientlift1} there is $Q: B\longrightarrow \Prob(G)$ and a finite set $D_Q\subset B$ such that  for all $k,h\in K$ and all $b\in B\setminus D_Q$ we have \begin{equation}\label{Q-control}
    \|Q(\varepsilon(k)b\varepsilon(h))-kQ(b)\|_1<\delta.
\end{equation} 
Now consider the finite set \begin{equation}
    C= D_Q \cup \bigcup_{k,h\in K}\varepsilon(k)D_Q\varepsilon(h)\subset B
\end{equation}

On $B$ consider the following (unoriented) graph structure: any $b\in B$ is connected to any $\varepsilon(k)b\varepsilon(h)$ for all $k,h\in K$. Then consider the metric given as follows. For any $b,c\in B$ let $d(b,c)$ to be the smallest integer $n$ for which there exist elements such that $b=b_0,b_1,\ldots,b_n=c$, where $b_j$ is adjacent to $b_{j+1}$ in the aforementioned graph structure on $B$. Also we let $d(b,c)=+\infty$ whenever $b$ and $c$ are in disjoint components.

Next consider the map $\chi: B \rightarrow [0,1]$ given by 

\begin{equation}
    \chi(b)=1- \frac{\min\{(\min_{c\in C}d(b,c)),m\}}{m},
\end{equation}
and notice that for all $k,h\in K$ we have \begin{equation}\label{chi-control}
  |\chi(\varepsilon(k)b \varepsilon(h))-\chi(b)|\leq \frac{1}{m}.  
\end{equation} 
Also, since $C$ is finite and the metric $d$ is locally finite it follows that the set $L=\{b\in B\,|\, d(b,C)\leq m\}$ is finite. Using Proposition \ref{supportlift2} for $K$, $L$ and $\delta>0$ one can find a map $M: G \longrightarrow \Prob (G)$ such that and a finite set $D_M \subset G$ such that for all $k,h\in K$ and $g\in G\setminus D_M$ with $\varepsilon(g)\in L$ we have  \begin{equation}\label{N-control}
    \|M(kgh)-kM(g)\|_1<\delta.
\end{equation}

Now define the following averaging map $\mu: G \longrightarrow \Prob(G)$ by 

\begin{equation}
    \mu(g)= \chi(\varepsilon(g)) M(g)+ (1-\chi(\varepsilon(g)))Q(\varepsilon(g))
\end{equation}

Now fix $k,h\in K$. Then for all $g\in G\setminus D_M$, we can see that 
\begin{equation}\label{globalmapcontrol}\begin{split}
    \|\mu(kgh)-k\mu(g)\|_1&\leq \chi(\varepsilon(kgh)) \|M(kgh)-kM(g)\|_1+(1-\chi(\varepsilon(kgh)))\|Q(\varepsilon(kgh))-kQ(\varepsilon(g))\|_1 +\\&+2 |\chi(\varepsilon(kgh))-\chi(\varepsilon(g)) |\end{split}
\end{equation}
We will control these terms separately.

If $\chi(\varepsilon(kgh))>0$ then $d(\varepsilon(kgh),C)<m$. As $\varepsilon(kgh)$ and $\varepsilon(g)$ are adjacent we get that $d(\varepsilon(g),C )\leq m$
 and hence $\varepsilon
 (g)\in L$. Thus by \eqref{N-control} we get that the first term of \eqref{globalmapcontrol} is smaller than $\delta$.

 If $1-\chi(\varepsilon(kgh))>0$ then $\chi(\varepsilon(kgh))<1$ and hence $\varepsilon(kgh)\nin C$.  If $\varepsilon(g)\in D_Q$ then $\varepsilon(kgh)\in C$, which is a contradiction. Hence $\varepsilon(g)\nin D_Q$ and therefore by \eqref{Q-control}, the second term of \eqref{globalmapcontrol} is smaller than $\delta$.

 Finally, by \eqref{chi-control} the third term of \eqref{globalmapcontrol} is at most $\frac{2}{m}$. Thus using these estimates together with \eqref{treshhold}
we get that for all $k,h\in K$ and $g\in G\setminus D_M$ with $\varepsilon (g)\in B\setminus D_Q$ we have that 
\begin{equation*}
   \|\mu(kgh)-k\mu(g)\|_1 \leq 2\delta + \frac{2}{m}<\epsilon.
\end{equation*}

Thus Lemma \ref{unif-riter} implies that there is a map $\mu:G \longrightarrow \Prob(G)$ such that for all $k,h\in G$ we have 

\begin{equation}\label{S}\lim_{g\rightarrow \infty}\|\mu(kgh)-k\mu(g)\|_1=0.\end{equation}

The kernel $A^{(I)}$ is amenable, hence exact.  The quotient $B$ is bi-exact,
hence exact.  Exactness of countable discrete groups is stable under group
extensions.  Therefore $G$ is exact. Combined with \eqref{S}, this shows $G$ is bi-exact. $\hfill\qed$

\subsection{A bidual interpretation of the proof}

We conclude this section by reformulating the preceding argument in terms of equivariant u.c.p. maps on the boundary bidual. This does not give a separate proof of Theorem~\ref{mainthm}, but makes the two parts of the argument more transparent.

\subsubsection{The relative bi-exactness with respect to $N$ part}

We use $\rho_h$ to denote the right translation action of $h\in G$ on
$\ell^\infty(G)$. Recall that bi-exactness
of $G$ is equivalent to amenability of the left $G$-action on the Higson corona algebra $
\bigl(\ell^\infty(G)/c_0(G)\bigr)^{G_r}$,
 where $S^{G_r}$ means the right $G$-invariant part of $S$.
For exact groups, this is also equivalent to having a $G$-equivariant u.c.p. (unital completely positive) map 
\[
\ell^\infty(G)\longrightarrow\left[
\bigl(\ell^\infty(G)/c_0(G)\bigr)^{**}
\right]^{G_r},
\]
see \cite[Theorem~5.5 (3)]{KEY26}. We will use this bidual formulation to
reinterpret the two parts of the preceding proof as maps into
complementary central corners.

At the level of u.c.p.\ maps, Lemma~\ref{uniform-coset-reiter} says that amenability of $N=A^{(I)}$ yields a left $G$-equivariant u.c.p. map $$ E_N:\ell^\infty(G)\longrightarrow \ell^\infty(B) = \ell^{\infty}(G/N) $$ by averaging functions on $G$ over $N$ with respect to an invariant mean on $N$.

Lemma~\ref{quotientlift1} can be viewed as the composition of two parts. First, since $B$ is bi-exact, there is a $B$-equivariant u.c.p. map $$ \Phi_B: \ell^\infty(B) \longrightarrow [(\ell^\infty(B)/c_0(B))^{**}]^{B_r}. $$ 

The second map is a canonical map \[ \iota^{**}_\varepsilon:[(\ell^\infty(B)/c_0(B))^{**}]^{B_r}\longrightarrow q_{N}^{\perp}[(\ell^\infty(G)/c_0(G))^{**}]^{G_r}, \]
where $ q_{N} $ is the identity of $ c_0(G,N)^{**}\subset \ell^\infty(G)^{**} $ with $ c_0(G,N) $ the algebra of functions $f$ such that $ \{g\in G:|f(g)|>\delta \}$ is always contained in finitely many fibers $ Ns(b) $ for all $\delta>0$. Indeed, consider the pullback $ \iota_\varepsilon:\ell^\infty(B)\longrightarrow \ell^\infty(G),
\;
\iota_\varepsilon(f)=f\circ\varepsilon $, and let $\iota^{**}_\varepsilon$ be its bidual map multiplied by $q_N^{\perp}$. Note that $q_N = \vee_{b\in B}\iota(1_{Ns(b) } ) $, where $\iota: \ell^\infty(G)\longrightarrow \ell^\infty(G)^{**}$ is the canonical embedding. Therefore if $ f\in c_0(B) $, then since $  (f\circ \varepsilon)1_{Ns(b)} = f(b)1_{Ns(b)} $, $ (\sum_{b\in F} 1_{Ns(b)} )_{F\Subset B}$ is an approximate identity for $ \iota_\varepsilon(f)=f\circ \varepsilon $. This implies that $ \iota(\iota_\varepsilon(f))q_{N}^{\perp}=0 $, so $ \iota^{**}_\varepsilon $ is well defined on $ (\ell^\infty(B)/c_0(B))^{**} $. It remains to check that the image of a $B_r$ invariant element under $\iota_\varepsilon^{**}$ is $G_r$ invariant: The function $ \iota_\varepsilon(f)=f\circ \varepsilon $ satisfies $ 
\rho_h^G(f\circ \varepsilon) = (\rho_{\varepsilon(h)}^{B}f)\circ \varepsilon$. This implies that $ \rho^G_h\circ \iota^{**}_\varepsilon= \iota^{**}_\varepsilon \circ \rho_{\varepsilon(h)}^B $, which shows the claim.

So the u.c.p. diagram for Lemma~\ref{uniform-coset-reiter} and Lemma~\ref{quotientlift1} is the $G$-equivariant u.c.p. map
\[ \Phi_{1}:\ell^\infty(G)\xrightarrow[A\text{ amenable}]{E_N} \ell^\infty(B) \xrightarrow[B \text{ bi-exact}]{ \Phi_B}  [(\ell^\infty(B)/c_0(B))^{**}]^{B_r}\xrightarrow{\iota_{\varepsilon}^{**}} q_{N}^{\perp}[(\ell^\infty(G)/c_0(G))^{**}]^{G_r} \]
which, by \cite[Theorem~5.5 (3)]{KEY26} and the exactness of $G$, is equivalent to $ G $ being bi-exact relative to $N$.

\subsubsection{Upgrading relative bi-exactness to full bi-exactness}

Similar to the construction of $E_N$, if $ B\ca I $ has amenable stabilizers, and $A$ is amenable, we can orbitwise define the $G$-equivariant u.c.p. map
\[ E_{ I }:\ell^\infty(G)\longrightarrow \ell^{\infty}(I). \]

Now, Proposition~\ref{nu-comparison} can be used to construct a u.c.p. map
\[ \Theta_\nu: \ell^\infty(I)\longrightarrow q_N[( \ell^\infty(G)/c_0(G) )^{**}]^{G_r}.  \]
More precisely, for all finite subsets $K\subset G$ and $L\subset B$, let $\nu_{K,L}:G\to \text{Prob}(I)$ be the map constructed in Proposition~\ref{nu-comparison}, and define the map $ \Theta_{ K,L }: \ell^\infty(I)\to \ell^\infty(G) $, $  \Theta_{ K,L }(f)(g):= \langle f, \nu_{K,L}(g)\rangle $. Let $ \Theta_\nu $ be a point-weak$^*$ cluster point of $ q_N\iota\circ \Theta_{K,L}: \ell^\infty(I)\to q_N( \ell^\infty(G)/c_0(G) )^{**} $ as $K\to G$ and $L\to B$.

To see the equivariance properties of $\Theta_\nu$, fix $b\in B$, $k,h\in G$, and $f\in\ell^\infty(I)$. Once $K$ contains $e,k^{-1},h$ and $L$ contains $b$, Proposition~\ref{nu-comparison} gives, as $g\to\infty$ with $\varepsilon(g)=b$, \[ \begin{aligned} \bigl| (k\Theta_{K,L}(f))(g) - \Theta_{K,L}(\varepsilon(k)\cdot f)(g) \bigr| &\leq \|f\|_\infty \bigl\| \nu_{K,L}(k^{-1}g) - \varepsilon(k)^{-1}\nu_{K,L}(g) \bigr\|_1 \longrightarrow0, \end{aligned} \] and \[ \begin{aligned} \bigl| \rho_h(\Theta_{K,L}(f))(g) - \Theta_{K,L}(f)(g) \bigr| &\leq \|f\|_\infty \bigl\| \nu_{K,L}(gh)-\nu_{K,L}(g) \bigr\|_1 \longrightarrow0. \end{aligned} \]
Since $ 
q_N=\bigvee_{b\in B}\iota(1_{Ns(b)})
$, passing to the weak$^*$ cluster point we obtain that $\Theta_\nu$ is $G$-equivariant and the image of $ \Theta_\nu $ is contained in $ q_N[( \ell^\infty(G)/c_0(G) )^{**}]^{G_r}$.

Composition $E_I$ and $\Theta_\nu$ we get the $G$-equivariant u.c.p. map
\[ \Phi_2:  \ell^\infty(G) \xrightarrow[\substack{A \text{ amenable}\\ B\ca I \text{ amenable stabilizers}}]{E_I} \ell^\infty(I) \xrightarrow[]{\Theta_\nu} q_N[ (\ell^\infty(G)/c_0(G))^{**} ]^{G_r}. \]

Adding up $\Phi_1$ and $\Phi_2$ we obtain the u.c.p. map for bi-exactness \[ \Phi=\Phi_1+\Phi_2: \ell^\infty(G)\ \to  [ (\ell^\infty(G)/c_0(G))^{**} ]^{G_r}.\]
Note that comparing to the original proof, the functions $\chi$ and $1-\chi$ become the indicator function of $q_N$ and $q_N^{\perp}$ in the limit. So overall the picture is
\begin{center}
\begin{adjustbox}{max width=\linewidth}
\begin{tikzcd}[
    column sep=2em,
    row sep=2.2em
]
&
\ell^\infty(B)
\arrow[r, "\Phi_B", "B\text{ bi-exact}"']
&
\bigl[(\ell^\infty(B)/c_0(B))^{**}\bigr]^{B_r}
\arrow[r, "\iota_\varepsilon^{**}"]
&
q_N^\perp
\bigl[(\ell^\infty(G)/c_0(G))^{**}\bigr]^{G_r}
\arrow[dr, hook]
&
\\
\ell^\infty(G)
\arrow[ur, "E_N", "A\text{ amenable}"']
\arrow[dr, "E_I",
"{\substack{
A\text{ amenable}\\
B\curvearrowright I\text{ has}\\
\text{amenable stabilizers}
}}"']
&&&&
\bigl[(\ell^\infty(G)/c_0(G))^{**}\bigr]^{G_r}
\\
&
\ell^\infty(I)
\arrow[rr, "\Theta_\nu"']
&&
q_N
\bigl[(\ell^\infty(G)/c_0(G))^{**}\bigr]^{G_r}
\arrow[ur, hook]
&
\end{tikzcd}
\end{adjustbox}

\[
\Phi_1=\iota_\varepsilon^{**}\circ \Phi_B\circ E_N,
\qquad
\Phi_2=\Theta_\nu \circ E_I,
\qquad
\boxed{\Phi=\Phi_1+\Phi_2}.
\]
\end{center}


\end{document}